\newtheorem{thm}{Theorem}
\newtheorem{lem}{Lemma}
\newtheorem{cor}{Corollary}
\newtheorem{prob}{Problem}
\newtheorem{rk}{Remark}
\renewenvironment{proof}{{\bfseries Proof.}}{}
\renewcommand{\maketitle}{\bgroup\setlength{\parindent}{0pt}
\begin{flushleft}
  \textbf{\@title}

  \@author
\end{flushleft}\egroup
}
\begin{document}
\title{\bf On the augmented Zagreb index}

\author{AKBAR ALI$^{*}$, ZAHID RAZA \& AKHLAQ AHMAD BHATTI\\
Department of Mathematics, National University of Computer and Emerging Sciences, B-Block, Faisal Town, Lahore-Pakistan.\\
$^{*}$Corresponding author: akbarali.maths@gmail.com}

\date{}
\maketitle

\renewcommand{\abstractname}{ABSTRACT}
\begin{abstract}
Topological indices play an important role in Mathematical Chemistry especially in the quantitative structure-property relationship (QSPR) and quantitative structure-activity relationship (QSAR). Recent research indicates that the augmented Zagreb index ($AZI$) possess the best correlating ability among several topological indices. The main purpose of the current study is to establish some mathematical properties of this index, or more precisely, to report tight bounds for the $AZI$ of chemical bicyclic and chemical unicyclic graphs. A Nordhaus-Gaddum-type result for the $AZI$ (of connected graph whose complement is connected) is also derived.
\end{abstract}
{\bf Keywords:} Topological index; augmented Zagreb index; chemical unicyclic graph; chemical bicyclic graph; Nordhaus-Gaddum type relations.

\section*{Introduction}

All the graphs discussed in this article are simple, finite and undirected. For a graph $G$, the vertex set and edge set of $G$ will be denoted by $V(G)$ and $E(G)$ respectively. The degree of a vertex $u\in V(G)$ will be denoted by $d_{u}$, while the edge connecting the vertices $u$ and $v$ will be denoted by $uv$ (Harary, 1969). In chemical graphs, the vertices correspond to atoms while the edges represent covalent bonds between atoms (Karcher \& Devillers, 1990). The complement $\overline{G}$ of a graph $G$ has the vertex set $V(\overline{G})=V(G)$ and the edge set $E(\overline{G})=\{uv| uv\not\in E(G)\}$. The maximum and minimum vertex degree in a graph $G$ is denoted by $\Delta(G)$ and $\delta(G)$ (or simply by $\Delta$ and $\delta$) respectively. A vertex $u$ is pendant if $d_{u}=1$. By a chemical graph we mean a connected graph with $\Delta\leq4$. A graph $G$ is $r$-regular (or simply regular) if $d_{u}=r$ for every vertex $u$ of $G$. A graph is bicyclic (respectively unicyclic) if it has $n+1$ (respectively $n$ ) edges. Denote by $C_{n}$ and $P_{n}$ the cycle and path respectively on $n$ vertices respectively. Undefined notations and terminologies may be referred to (Harary, 1969; Bondy \& Murty 1976).

Topological indices are numerical parameters of a graph which are invariant under graph isomorphisms. Nowadays, there are many such indices that have found applications in Mathematical Chemistry especially in the quantitative structure-property relationship (QSPR) and quantitative structure-activity relationship (QSAR) (Devillers \& Balaban, 1999; Gutman \& Furtula, 2010; Todeschini \& Consonni, 2009). A large number of such indices depend only on vertex degree of the molecular graph. One of them is the atom-bond connectivity ($ABC$) index, proposed by Estrada \textit{et al.} (1998) and is defined as
\[ABC(G)=\sum_{uv\in E(G)}\sqrt{\frac{d_{u}+d_{v}-2}{d_{u}d_{v}}}.\]
This index provides a good model for the stability of linear and branched
alkanes as well as the strain energy of cycloalkanes (Estrada \textit{et al.}, 1998; Estrada, 2008). Details about this index can be found in (Furtula \textit{et al.}, 2012; Hosseini \textit{et al.}, 2014; Lin \textit{et al.}, 2013; Dimitrov, 2013, 2014; Chen \& Guo, 2012; Gutman \textit{et al.}, 2013; Imran \textit{et al.}, 2014; Das \textit{et al.}, 2012; Chen \textit{et al.}, 2012) and the references cited therein.

Inspired by work on the $ABC$ index, Furtula \textit{et al.} (2009) proposed the following modified version of the $ABC$ index and called it as augmented Zagreb index ($AZI$):
\[AZI(G)=\displaystyle\sum_{uv\in E(G)}\left(\frac{d_{u}d_{v}}{d_{u}+d_{v}-2}\right)^{3}.\]
The prediction power is better than the $ABC$ index in the study of heat of formation for heptanes and octanes (Furtula \textit{et al.} 2009). Gutman \& To\v{s}ovi\'{c} (2013) tested the correlation abilities of 20 vertex-degree-based topological indices for the case of standard heats of formation and normal boiling points of octane isomers, and they found that the $AZI$ yields the best results. Moreover, Furtula \textit{et al.} (2013) recently undertook a comparative study of the structure-sensitivity of twelve vertex-degree-based topological indices by using trees and concluded that the $AZI$ has the greatest structure sensitivity.

When a new topological index is introduced in Mathematical Chemistry, one of the important questions that need to be answered is for which members of a certain class of $n$-vertex (chemical or molecular) graphs this index assumes minimal and maximal values. Furtula \textit{et al.} (2009) answered this question for the class of all $n$-vertex acyclic chemical graphs. The motivation for the current study comes from this paper (Furtula \textit{et al.}, 2009), and especially the recently proven fact that the $AZI$ possess the best correlating ability among several topological indices has attracted our attention. The main purpose of the present article is to solve the following molecular problem:
\begin{prob} For a given set of unicyclic (bicyclic) molecules with fixed number of atoms, find those molecules which has minimum and maximum $AZI$ value.
\end{prob}

In (Furtula \textit{et al.}, 2009) the extremal properties of the $AZI$ of trees and chemical trees were studied. Huang \textit{et al.} (2012) and Wang \textit{et al.} (2012) gave sharp lower and upper bounds for the $AZI$ under various classes of connected graphs (e.g. trees, unicyclic graphs, bicyclic graphs, etc.) and characterized corresponding extremal graphs. Huang \& Bolian (2015) ordered graphs by the $AZI$ in several families of graphs (trees, unicyclic, bicyclic and connected graphs). Zhan \textit{et al.} (2015) characterized the $n$-vertex unicyclic graphs with the second minimal $AZI$ value and $n$-vertex bicyclic graphs with the minimal $AZI$ value.

In 1956, Nordhaus \& Gaddum (1956) gave tight bounds on the product and sum of the chromatic numbers of a graph and its complement. Since then, such type of results have been derived for several other graph invariants. Details about this topic can be found in the recent survey (Aouchiche \& Hansen, 2013) and the references cited therein.

Recently in (Ali \textit{et al.}, 2015), we established sharp lower and upper bounds for some topological indices of a certain family of chemical graphs known as polyomino chains. In this paper, firstly we consider the two families of chemical graphs (chemical bicyclic and chemical unicyclic graphs) and find those elements in each family for which the $AZI$ attains its minimum and maximum value by using the approach introduced by Furtula \textit{et al.} (2009). Secondly, we obtain a Nordhaus-Gaddum-type result for the $AZI$ of a connected graph whose complement is connected.

\section*{The AZI of chemical bicyclic and Chemical Unicyclic graphs}

In this section, we give tight bounds for the $AZI$ of chemical bicyclic and chemical unicyclic graphs. To do so, we need some lemmas and the following notations. Let $n_{i}(G)$ be the number of vertices of degree $i$ in a graph $G$, and $x_{i,j}(G)$ be the number of edges connecting the vertices of degree $i$ and $j$.

\begin{lem}\label{L1}
   If $B_{n}$ is a chemical bicyclic graph with $n$ vertices but has no pendent vertex, then
\[AZI(B_{n})<\frac{1376}{135}n+\frac{416}{15}.\]
\end{lem}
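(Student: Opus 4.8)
The plan is to exploit the fact that, for a bicyclic graph with no pendent vertex, the degree sequence is almost completely determined. First I would note that $B_n$ has $n+1$ edges, so $\sum_{v\in V(B_n)}d_v=2(n+1)$; since $B_n$ has no pendent vertex, $d_v\ge 2$ for every $v$, whence $\sum_{v}(d_v-2)=2$. As each summand is a non-negative integer, the multiset $\{d_v-2:d_v>2\}$ is either $\{2\}$ or $\{1,1\}$. In particular $\Delta(B_n)\le 4$ holds automatically, and exactly one of the following occurs: (i) $n_4(B_n)=1$, $n_3(B_n)=0$, and the remaining $n-1$ vertices have degree $2$; or (ii) $n_3(B_n)=2$, $n_4(B_n)=0$, and the remaining $n-2$ vertices have degree $2$.

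Next I would compute $AZI(B_n)$ exactly in each case, using that $\left(\frac{d_ud_v}{d_u+d_v-2}\right)^3=8$ whenever $\{d_u,d_v\}\in\{\{2,2\},\{2,3\},\{2,4\}\}$ and equals $\frac{729}{64}$ for a $(3,3)$-edge. In case (i), all four edges at the degree-$4$ vertex are $(2,4)$-edges and the other $n-3$ edges are $(2,2)$-edges, so $AZI(B_n)=8(n+1)$. In case (ii), let $u,v$ be the two vertices of degree $3$: if $uv\in E(B_n)$ then $x_{3,3}(B_n)=1$, $x_{2,3}(B_n)=4$, $x_{2,2}(B_n)=n-4$, so $AZI(B_n)=8n+\frac{729}{64}$; if $uv\notin E(B_n)$ then $x_{3,3}(B_n)=0$, $x_{2,3}(B_n)=6$, $x_{2,2}(B_n)=n-5$, so $AZI(B_n)=8(n+1)$. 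Hence $AZI(B_n)\le 8n+\frac{729}{64}$ in every case.

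Finally I would conclude by the elementary comparison
\[8n+\frac{729}{64}<\frac{1376}{135}\,n+\frac{416}{15},\]
which holds for all $n\ge 1$ because $\frac{1376}{135}-8=\frac{296}{135}>0$ and $\frac{416}{15}-\frac{729}{64}=\frac{15689}{960}>0$. This yields the stated (deliberately non-tight) bound, whose purpose is to dominate the $AZI$ of the extremal chemical bicyclic graphs that do possess pendent vertices.

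I do not expect a genuine obstacle: the single identity $\sum_v(d_v-2)=2$ does all the work, collapsing the problem to a finite case check. The only points requiring care are the correct enumeration of edge types — in particular distinguishing in case (ii) between the configuration producing six $(2,3)$-edges and the one producing four $(2,3)$-edges together with a single $(3,3)$-edge — and verifying that the edge counts $x_{2,2}(B_n)$ are non-negative for the values of $n$ at which each configuration actually occurs (the smallest being $n=4$, realised by $K_4$ with one edge deleted, which falls under case (ii) with $uv\in E(B_n)$).
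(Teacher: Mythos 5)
Your proof is correct, and it reaches exactly the same two values that the paper's case analysis produces, namely $AZI(B_n)=8(n+1)$ or $AZI(B_n)=8n+\tfrac{729}{64}$, followed by the same elementary comparison with $\tfrac{1376}{135}n+\tfrac{416}{15}$. The difference is in how you arrive at the case split: the paper appeals to the structural classification of pendant-free bicyclic graphs (the two parametrized families $B_n^1,B_n^2$ of Fig.~1, essentially the ``two cycles joined by a path'' type and the theta type) and evaluates $AZI$ on each family, whereas you bypass the structural classification entirely by using the handshake identity $\sum_v(d_v-2)=2$ to pin down the degree sequence (one vertex of degree $4$, or two vertices of degree $3$) and then count edge types $(2,2)$, $(2,3)$, $(2,4)$, $(3,3)$ directly. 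Your route is somewhat more self-contained and robust: it does not depend on verifying the figure-based isomorphism claim, it makes explicit why $\Delta\le 4$ is automatic here, and it isolates the only edge type with weight different from $8$ (the $(3,3)$-edge, occurring exactly when the two degree-$3$ vertices are adjacent). The paper's route, in exchange, exhibits the extremal configurations concretely as pictures. Your arithmetic checks ($\tfrac{1376}{135}-8=\tfrac{296}{135}>0$ and $\tfrac{416}{15}-\tfrac{729}{64}=\tfrac{15689}{960}>0$) and the non-negativity remark about $x_{2,2}$ (smallest case $n=4$, $K_4$ minus an edge) are accurate, so no gap remains.
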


\begin{proof}
Note that $B_{n}$ is isomorphic to one of the graphs $\displaystyle B_{n}^{1},B_{n}^{2}$ shown in Fig. \ref{fig:1}.
\renewcommand{\figurename}{Fig.}
\begin{figure}[H]
    \centering
     \includegraphics[width=6in, height=1.5in]{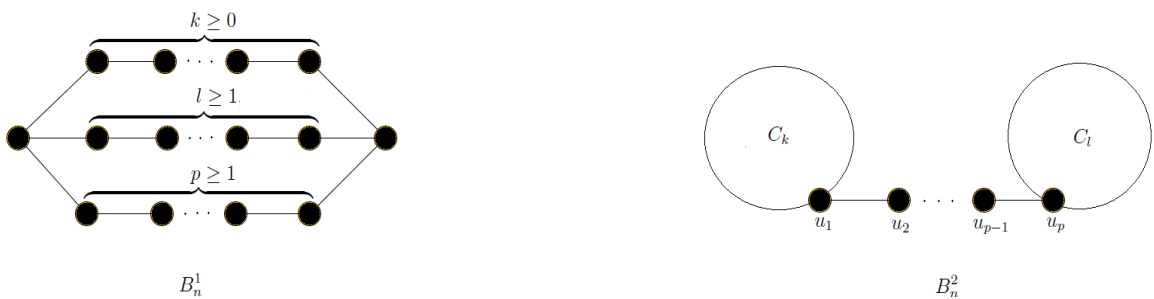}
     \caption{Two graphs $B_{n}^{1}$ and $B_{n}^{2}$, used in the proof of Lemma \ref{L1}}
      \label{fig:1}
\end{figure}
But,\\
$AZI(B_{n}^{1})=
\begin{cases}
       8n+\frac{729}{64} & \text{if    } k=0\\
       8(n+1) & \text{otherwise}
\end{cases}$
and
   $AZI(B_{n}^{2})=
\begin{cases}
       8n+\frac{729}{64} & \text{if    } p=2\\
       8(n+1) & \text{otherwise}
\end{cases}$
It can be easily seen that
\[AZI(B_{n}^{i})<\frac{1376}{135}n+\frac{416}{15}; \  i=1, 2.\]
\end{proof}

\begin{lem}\label{L2}
   Let $B_{n}$ be a chemical bicyclic graph with $n$ vertices such that
\begin{equation}\label{11111}
F(B_{n})\geq\frac{n+9}{5}\widetilde{\theta}_{4,4},
\end{equation}
where
\begin{equation}\label{2}
F(B_{n})=\sum_{uv\in E(B_{n})}\left(8-\left(\frac{d_{u}d_{v}}{d_{u}+d_{v}-2}\right)^{3}\right)=8(n+1)-AZI(B_{n}),
\end{equation}

and $\widetilde{\theta}_{i,j}= 8-\left(\frac{ij}{i+j-2}\right)^{3}=8-\theta_{i,j}$ . Then

\begin{equation}\label{1}
AZI(B_{n})\leq\frac{1376}{135}n+\frac{416}{15}
\end{equation}
\end{lem}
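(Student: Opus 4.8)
The plan is to prove this by a direct algebraic manipulation rather than by any combinatorial argument about the structure of $B_{n}$: once the definitions are unwound, the asserted bound \eqref{1} on $AZI(B_{n})$ turns out to be equivalent to the hypothesis \eqref{11111}, so the lemma is really a reformulation that will be convenient to invoke later. Accordingly, the proof amounts to bookkeeping.

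First I would evaluate the constant $\widetilde{\theta}_{4,4}$ explicitly. Since $\theta_{4,4}=\left(\frac{4\cdot 4}{4+4-2}\right)^{3}=\left(\frac{8}{3}\right)^{3}=\frac{512}{27}$, we obtain $\widetilde{\theta}_{4,4}=8-\frac{512}{27}=-\frac{296}{27}$. Substituting this value into \eqref{11111} shows that the hypothesis reads
\[
F(B_{n})\ \geq\ \frac{n+9}{5}\cdot\left(-\frac{296}{27}\right)\ =\ -\frac{296(n+9)}{135}.
\]

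Next I would use the identity recorded in \eqref{2}, namely $AZI(B_{n})=8(n+1)-F(B_{n})$, to convert this lower bound on $F(B_{n})$ into an upper bound on $AZI(B_{n})$:
\[
AZI(B_{n})=8(n+1)-F(B_{n})\ \leq\ 8(n+1)+\frac{296(n+9)}{135}.
\]
Finally, writing the right-hand side over the common denominator $135$ gives $\dfrac{1080(n+1)+296(n+9)}{135}=\dfrac{1376n+3744}{135}=\dfrac{1376}{135}n+\dfrac{416}{15}$, which is precisely \eqref{1}.

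I do not expect any genuine obstacle here: the only thing demanding care is the arithmetic, specifically the evaluation of the cube $\left(\frac{8}{3}\right)^{3}$ and keeping track of the common denominator. (The substantive work of the section lies elsewhere — namely in verifying, for the relevant families of chemical bicyclic graphs, that the hypothesis \eqref{11111} actually holds — rather than in this lemma.)
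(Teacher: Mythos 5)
Your proposal is correct and follows exactly the paper's own argument: substitute $\widetilde{\theta}_{4,4}=-\tfrac{296}{27}$ into the hypothesis, convert the lower bound on $F(B_{n})$ into an upper bound on $AZI(B_{n})$ via the identity $AZI(B_{n})=8(n+1)-F(B_{n})$, and simplify. The arithmetic ($1080(n+1)+296(n+9)=1376n+3744$ and $3744/135=416/15$) checks out.
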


\begin{proof}
After substituting the value of $\widetilde{\theta}_{4,4}$ in (\ref{11111}), one have
\begin{equation}\label{3}
F(B_{n})\geq -\frac{296}{135}n-\frac{296}{15}.
\end{equation}
By using the identity (\ref{2}) in the above inequality (\ref{3}) and then after simple calculations, one obtained the desired result.

\end{proof}

\begin{thm}\label{t1}
   If $B_{n}$ is a chemical bicyclic graph with $n$ vertices, then
\begin{equation}\label{1}
AZI(B_{n})\leq\frac{1376}{135}n+\frac{416}{15}.
\end{equation}
If $B_{n}\cong B_{n}^{'}$ (where $B_{n}^{'}$ is depicted in Fig. \ref{fig:2}) then the bound is attained.
\end{thm}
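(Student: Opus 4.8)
The plan is to reduce the general statement to the two special cases already handled. By Lemma~\ref{L1}, the bound holds for chemical bicyclic graphs with no pendent vertex, so assume $B_n$ has at least one pendent vertex. In view of Lemma~\ref{L2} it suffices to verify the inequality $F(B_n)\geq\frac{n+9}{5}\widetilde{\theta}_{4,4}$, equivalently (after substituting $\widetilde{\theta}_{4,4}=8-\frac{64^3}{6^3}=-\frac{296}{27}$) the inequality $F(B_n)\geq-\frac{296}{135}n-\frac{296}{15}$. Since $\widetilde{\theta}_{4,4}<0$, the right-hand side is negative, so I expect this to be comfortably true whenever $B_n$ carries a pendent vertex, because pendent edges contribute the large positive quantity $\widetilde{\theta}_{1,d}=8-\left(\frac{d}{d-1}\right)^3$ to $F(B_n)$, while $\widetilde{\theta}_{i,j}$ for $i,j\in\{2,3,4\}$ is bounded (in absolute value) by a modest constant.

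The main work, then, is a counting argument. First I would record the edge-partition constraints for a chemical bicyclic graph: $\sum_i n_i = n$, $\sum_i i\,n_i = 2|E| = 2(n+1)$, and the handshake-type relations $\sum_j x_{i,j}(G) = i\,n_i - (\text{correction for }i\text{-}i\text{ edges})$, i.e. the standard identities $\sum_{j} x_{i,j} + 2x_{i,i}$ style bookkeeping linking the $x_{i,j}$ to the $n_i$. Writing $F(B_n)=\sum_{i\le j}x_{i,j}\,\widetilde{\theta}_{i,j}$, I would bound below each term: the pendent-edge terms ($i=1$) are positive and I would keep them, and for the non-pendent terms I would use $\widetilde{\theta}_{i,j}\geq\widetilde{\theta}_{4,4}=-\frac{296}{27}$ for all $i,j\in\{1,2,3,4\}$ (one checks $\theta_{i,j}=\left(\frac{ij}{i+j-2}\right)^3$ is maximized at $i=j=4$ over this range, so $\widetilde{\theta}_{i,j}$ is minimized there). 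This gives $F(B_n)\geq -\frac{296}{27}\bigl(|E|-n_1\bigr)\geq -\frac{296}{27}(n+1-1)=-\frac{296}{27}n$, and since $-\frac{296}{27}n \geq -\frac{296}{135}n-\frac{296}{15}$ reduces to a linear inequality in $n$ that holds for all $n$ in the relevant range (indeed $\frac{296}{27}n\le\frac{296}{135}n+\frac{296}{15}\iff \frac{4\cdot296}{135}n\le\frac{296}{15}\iff n\le 9/4$ — which fails for large $n$!), I see that the crude bound is \emph{not} enough and the pendent-edge contribution must genuinely be used.

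So the sharper step is to exploit $n_1\geq 1$ together with the degree-sum identity more carefully: a chemical bicyclic graph with a pendent vertex cannot have too many degree-$4$ vertices, and in fact I would show that the number of $4$-$4$ edges (the only edges attaining the worst value $\widetilde{\theta}_{4,4}$) is controlled. Concretely, from $\sum i\,n_i = 2n+2$ and $\sum n_i = n$ one gets $\sum_i (i-2)n_i = 2$, so $n_3 + 2n_4 = n_1 + 2$; with $n_1\ge 1$ this forces a trade-off, and each unit of $n_1$ "pays for" itself via the positive term $\widetilde{\theta}_{1,j}$ which dominates $-\widetilde{\theta}_{4,4}$ in the right proportion. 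I would therefore estimate $F(B_n)=\sum x_{1,j}\widetilde{\theta}_{1,j} + \sum_{2\le i\le j} x_{i,j}\widetilde{\theta}_{i,j}$, bound the first sum below using $\widetilde{\theta}_{1,j}\ge\widetilde{\theta}_{1,4}$, bound the second using $\widetilde{\theta}_{i,j}\ge\widetilde{\theta}_{4,4}$ and $x_{4,4}\le |E| = n+1$, and then check the resulting linear-in-$n$ inequality $n_1\,\widetilde{\theta}_{1,4} + (n+1)\widetilde{\theta}_{4,4}\ge \frac{n+9}{5}\widetilde{\theta}_{4,4}$ — rearranged this is $n_1\,\widetilde{\theta}_{1,4}\ge\bigl(\frac{n+9}{5}-(n+1)\bigr)\widetilde{\theta}_{4,4}=\frac{4-4n}{5}\widetilde{\theta}_{4,4}$, and since the right side is $\le 0$ for $n\ge 1$ while the left side is $\ge 0$, this holds. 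The remaining point is to confirm the hypothesis of Lemma~\ref{L2} is met, apply it to conclude \eqref{1}, and then exhibit $B_n'$ (Fig.~\ref{fig:2}) — presumably the graph whose vertices all have degree $3$ or $4$ with a single pendent vertex arranged to maximize the count of high-value edges — and verify by direct computation that equality holds.

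The step I expect to be the main obstacle is pinning down the exact extremal configuration $B_n'$ and checking the equality case: the inequalities above are slack in several places (replacing each $\widetilde{\theta}_{i,j}$ by $\widetilde{\theta}_{4,4}$, bounding $x_{4,4}$ by $|E|$), so one must argue that there is a bicyclic graph realizing — or asymptotically realizing — a configuration consistent with Lemma~\ref{L2} holding with equality, and this requires knowing precisely which edge types appear in $B_n'$.
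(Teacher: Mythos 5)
Your reduction to Lemma \ref{L1} (no pendent vertex) plus Lemma \ref{L2} is the same as the paper's, but the step that is supposed to verify the hypothesis $F(B_{n})\geq\frac{n+9}{5}\widetilde{\theta}_{4,4}$ collapses, and it does so because of a sign error. After rearranging you need $n_{1}\widetilde{\theta}_{1,4}\geq\frac{4-4n}{5}\widetilde{\theta}_{4,4}$ and you assert the right-hand side is $\leq 0$; but it is the product of two nonpositive quantities ($\frac{4-4n}{5}\leq 0$ and $\widetilde{\theta}_{4,4}=-\frac{296}{27}<0$), hence equals $\frac{1184}{135}(n-1)\geq 0$ and grows linearly in $n$. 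Since $\widetilde{\theta}_{1,4}=\frac{152}{27}=\frac{760}{135}<\frac{1184}{135}$ and $n_{1}<n$, the inequality you rely on is false for every chemical bicyclic graph once $n$ is moderately large, no matter what $n_{1}$ is; so your ``sharper step'' proves nothing. (A smaller slip: $\widetilde{\theta}_{1,j}\geq\widetilde{\theta}_{1,4}$ is also wrong, since $\widetilde{\theta}_{1,2}=0$ and $\widetilde{\theta}_{1,3}=\frac{37}{8}<\frac{152}{27}$.) Moreover, no estimate of this crude type can succeed: replacing every non-pendent edge by the worst value $\widetilde{\theta}_{4,4}$ and allowing up to $n+1$ such edges overshoots by roughly the same factor of five that killed your first attempt, because the whole content of the theorem is that only about $\frac{n+9}{5}$ edges can behave like $4$--$4$ edges. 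Even refining with $x_{4,4}\leq 2n_{4}$ and the identity $n_{3}+2n_{4}=n_{1}+2$ still fails for graphs whose pendent vertices are attached directly to degree-$4$ vertices, so some structural control of \emph{how} pendent vertices are attached is unavoidable; your sketch contains no such step.

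This structural control is exactly what the paper's proof supplies. It takes a minimal counterexample to $F(B_{n})\geq\frac{n+9}{5}\widetilde{\theta}_{4,4}$ and applies local transformations (subdividing an edge $uv$ with $d_{u}=1$ and $d_{v}\in\{3,4\}$; appending paths of length two; deleting a degree-$2$ vertex and adding an edge) to force, successively, $x_{1,3}=x_{1,4}=0$, then $n_{1}=n_{2}$, then $n_{3}^{'}=0$; in the resulting configuration $\widetilde{\theta}_{2,j}=0$, $x_{4,4}=n_{4}+1$, $x_{3,4}=n_{3}$ and $n_{4}=\frac{1}{5}(n-3n_{3}+4)$, which yield $F=\frac{n+9}{5}\widetilde{\theta}_{4,4}+\left(\widetilde{\theta}_{3,4}-\frac{3}{5}\widetilde{\theta}_{4,4}\right)n_{3}\geq\frac{n+9}{5}\widetilde{\theta}_{4,4}$, the desired contradiction. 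Finally, your guess about the extremal graph is off: $B_{n}^{'}$ is not an almost-regular graph with a single pendent vertex, but has $n_{1}=n_{2}$, with pendent paths of length two attached to a bicyclic core of degree-$4$ vertices, and equality is checked by the direct computation $AZI(B_{n}^{'})=(4k+20)\cdot 8+(k+7)\left(\frac{8}{3}\right)^{3}=\frac{1376}{135}n+\frac{416}{15}$ with $n=5k+26$.
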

\renewcommand{\figurename}{Fig.}
\begin{figure}[H]
    \centering
    \includegraphics[width=4.0in, height=1.5in]{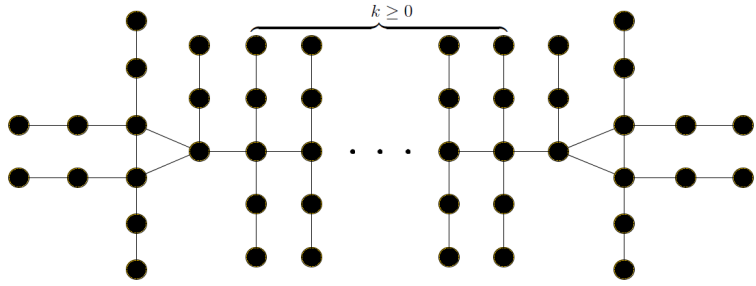}
    \caption{Chemical bicyclic graph $B_{n}^{'}$ where $n=5k+26$.}
    \label{fig:2}
\end{figure}
\begin{proof}
If $B_{n}$ has no pendent vertex, then from Lemma \ref{L1} the required result follows. So let us suppose that $B_{n}$ has at least one pendent vertex. We will prove the following inequality
\[F(B_{n})\geq\frac{n+9}{5}\widetilde{\theta}_{4,4},\]
where $F(B_{n})$ and $\widetilde{\theta}_{i,j}$ are defined in the Lemma \ref{L2}. The Inequality (\ref{1}) will, then, directly follow from Lemma \ref{L2}. Let us, contrarily, suppose that there exist some $n$ and some chemical bicyclic graph $B_{n}^{3}$ having $n$ vertices satisfying
\begin{equation}\label{4}
F(B_{n}^{3})<\frac{n+9}{5}\widetilde{\theta}_{4,4}.
\end{equation}
Among all such chemical bicyclic graphs, let $B_{n}^{4}$ be the one with minimum value of $x_{1,3}(B_{n}^{4})+x_{1,4}(B_{n}^{4})$.
We claim that $x_{1,3}(B_{n}^{4})=x_{1,4}(B_{n}^{4})=0$. Suppose to the contrary that $u,v\in V(B_{n}^{4})$ such that $d_{u}=1$ and $d_{v}=3$ or $4$. Consider the graph $B_{n+1}^{5}$ obtained from $B_{n}^{4}$ by subdividing the edge $uv$. Then
\[x_{1,3}(B_{n+1}^{5})+x_{1,4}(B_{n+1}^{5})<x_{1,3}(B_{n}^{4})+x_{1,4}(B_{n}^{4})\]
and
\begin{eqnarray*}
F(B_{n+1}^{5})&=&F(B_{n}^{4})+\widetilde{\theta}_{1,2}+\widetilde{\theta}_{2,d_{v}}-\widetilde{\theta}_{1,d_{v}}
<\frac{n+9}{5}\widetilde{\theta}_{4,4}-\displaystyle\min_{i=3,4}\widetilde{\theta}_{1,i}\\
&=&\frac{(n+1)+9}{5}\widetilde{\theta}_{4,4}-\frac{1}{5}\widetilde{\theta}_{4,4}-\widetilde{\theta}_{1,3}
<\frac{(n+1)+9}{5}\widetilde{\theta}_{4,4}.
\end{eqnarray*}
This contradicts the definition of $B_{n}^{4}$. Hence $x_{1,3}(B_{n}^{4})=x_{1,4}(B_{n}^{4})=0$. Now, consider the collection $\mathbb{B}_{1}$ of chemical bicyclic graphs $B_{n}^{6}$ satisfying:
\begin{description}
  \item[1)] $x_{1,3}(B_{n}^{6})=x_{1,4}(B_{n}^{6})=0,$
  \item[2)] $F(B_{n}^{6})<\frac{n+9}{5}\widetilde{\theta}_{4,4}.$
\end{description}

Since $B_{n}^{4}$ belongs to $\mathbb{B}_{1}$, this collection is non-empty. Condition 1) implies that $n_{2}(B_{n}^{6})\geq n_{1}(B_{n}^{6})$. Among all graphs in $\mathbb{B}_{1}$, let $B_{n}^{7}$ be one, having the smallest value of $n_{2}(B_{n}^{7})-n_{1}(B_{n}^{7})$. We claim that $n_{2}(B_{n}^{7})-n_{1}(B_{n}^{7})=0$. Contrarily, suppose that
$n_{2}(B_{n}^{7})-n_{1}(B_{n}^{7})>0$ and let $u,v,w\in V(B_{n}^{7})$ such that $uv,uw\in E(B_{n}^{7}), d_{u}=2$ and $d_{v},d_{w}\geq2$. Then we have three cases:\\
\\\textit{Case 1.} Exactly one of $d_{v},d_{w}$ is 2.\\
Without loss of generality we can assume that $d_{v}=2$ and $d_{w}\geq3$.

\textit{Subcase 1.1.} If $vw\in B_{n}^{7}$.
Let $B_{n+4}^{8}$ be the graph obtained from $B_{n}^{7}$ by adding the vertices $u_{1},u_{2},v_{1},v_{2}$ and edges $uu_{1},u_{1}u_{2},vv_{1},v_{1}v_{2}$. Then
\begin{eqnarray*}
F(B_{n+4}^{8})&=&F(B_{n}^{7})+2(\widetilde{\theta}_{3,d_{w}}-\widetilde{\theta}_{2,d_{w}})+(\widetilde{\theta}_{3,3}-\widetilde{\theta}_{2,2})+2\widetilde{\theta}_{3,2}+2\widetilde{\theta}_{1,2}\\
&<&\frac{n+9}{5}\widetilde{\theta}_{4,4}+2\displaystyle\max_{i=3,4}\widetilde{\theta}_{3,i}+\widetilde{\theta}_{3,3}
= \frac{(n+4)+9}{5}\widetilde{\theta}_{4,4}-\frac{4}{5}\widetilde{\theta}_{4,4}+3\widetilde{\theta}_{3,3}\\
&<&\frac{(n+4)+9}{5}\widetilde{\theta}_{4,4}.
\end{eqnarray*}
It means that $B_{n+4}^{8}\in \mathbb{B}_{1}$. But
\[n_{2}(B_{n+4}^{8})-n_{1}(B_{n+4}^{8})<n_{2}(B_{n}^{7})-n_{1}(B_{n}^{7}).\]
This is a contradiction to the definition of $B_{n}^{7}$.

\textit{Subcase 1.2.} If $vw\notin B_{n}^{7}$. Consider the graph $B_{n-1}^{9}$ obtained from $B_{n}^{7}$ by removing the vertex $u$ and adding the edge $vw$. Then
\[n_{2}(B_{n-1}^{9})-n_{1}(B_{n-1}^{9})<n_{2}(B_{n}^{7})-n_{1}(B_{n}^{7})\]
and
\begin{eqnarray*}
F(B_{n-1}^{9})&=&F(B_{n}^{7})+\widetilde{\theta}_{d_{v},d_{w}}-\widetilde{\theta}_{2,d_{w}}-\widetilde{\theta}_{2,d_{v}}=F(B_{n}^{7})\\
&<&\frac{n+9}{5}\widetilde{\theta}_{4,4}
<\frac{(n-1)+9}{5}\widetilde{\theta}_{4,4},
\end{eqnarray*}
which is again a contradiction to the definition of $B_{n}^{7}$.\\
\\\textit{Case 2.} $d_{v}=d_{w}=2$.\\
It is can be easily seen that $v$ and $w$ are non-adjacent. By using similar technique used in the Subcase 1.2, one obtains a contradiction.\\
\\\textit{Case 3.} $d_{v},d_{w}\geq3$.\\
Let $\widetilde{B}_{n+2}^{9}$ be the graph obtained from $B_{n}^{7}$ by adding the vertices $u_{1},u_{2}$ and edges $uu_{1},u_{1}u_{2}$. Then
\begin{eqnarray*}
F(\widetilde{B}_{n+2}^{9})&=&F(B_{n}^{7})+(\widetilde{\theta}_{3,d_{v}}-\widetilde{\theta}_{2,d_{v}})+(\widetilde{\theta}_{3,d_{w}}-\widetilde{\theta}_{2,d_{w}})+\widetilde{\theta}_{3,2}+\widetilde{\theta}_{1,2}\\
&<&\frac{n+9}{5}\widetilde{\theta}_{4,4}+\widetilde{\theta}_{3,d_{v}}+\widetilde{\theta}_{3,d_{w}}<\frac{n+9}{5}\widetilde{\theta}_{4,4}+2\displaystyle\max_{i=3,4}\widetilde{\theta}_{3,i}\\
&=&\frac{(n+2)+9}{5}\widetilde{\theta}_{4,4}+(2\widetilde{\theta}_{3,3}-\frac{2}{5}\widetilde{\theta}_{4,4})<\frac{(n+2)+9}{5}\widetilde{\theta}_{4,4}
\end{eqnarray*}
moreover,
\[n_{2}(\widetilde{B}_{n+2}^{9})-n_{1}(\widetilde{B}_{n+2}^{9})<n_{2}(B_{n}^{7})-n_{1}(B_{n}^{7})\]
which is a contradiction, again. In all three cases, contradiction is obtained. Hence $n_{2}(B_{n}^{7})-n_{1}(B_{n}^{7})=0$.

Now, let $\mathbb{B}_{2}$ be a sub collection of $\mathbb{B}_{1}$, consisting of those graphs $B_{n}^{10}\in\mathbb{B}_{1}$ which satisfy the property $n_{2}(B_{n}^{10})-n_{1}(B_{n}^{10})=0$. Note that the collection $\mathbb{B}_{2}$ is non-empty because $B_{n}^{7}\in\mathbb{B}_{2}$. (Let us denote by $n_{3}^{'}(G)$ the number of vertices of degree 3 adjacent to at least two vertices of degree greater than 2 in a graph $G$) Suppose that $B_{n}^{11}$ be a member of $\mathbb{B}_{2}$ having minimum value of $n_{3}^{'}(B_{n}^{11})$. We claim that $n_{3}^{'}(B_{n}^{11})=0$. On contrary, suppose that there exist a vertex $u$ of degree 3 adjacent to two vertices $v,w$ of degrees greater than 2 and to a vertex $z$ of degree greater than 1. Consider the graph $B_{n+2}^{12}$ obtained from $B_{n}^{11}$ by adding the vertices $u_{1},u_{2}$ and edges $uu_{1},u_{1}u_{2}$. Then $n_{3}^{'}(B_{n+2}^{12})<n_{3}^{'}(B_{n}^{11})$.
But
\begin{eqnarray*}
F(B_{n+2}^{12})&=&F(B_{n}^{11})+(\widetilde{\theta}_{4,d_{v}}-\widetilde{\theta}_{3,d_{v}})+(\widetilde{\theta}_{4,d_{w}}-\widetilde{\theta}_{3,d_{w}})+(\widetilde{\theta}_{4,d_{z}}-\widetilde{\theta}_{3,d_{z}})+\widetilde{\theta}_{4,2}+\widetilde{\theta}_{1,2}\\
&<&\frac{n+9}{5}\widetilde{\theta}_{4,4}+2\displaystyle\max_{i=3,4}(\widetilde{\theta}_{4,i}-\widetilde{\theta}_{3,i})+\displaystyle\max_{i=2,3,4}(\widetilde{\theta}_{4,i}-\widetilde{\theta}_{3,i})\\
&=&\frac{(n+2)+9}{5}\widetilde{\theta}_{4,4}-\frac{2}{5}\widetilde{\theta}_{4,4}+2\displaystyle\max_{i=3,4}(\widetilde{\theta}_{4,i}-\widetilde{\theta}_{3,i})<\frac{(n+2)+9}{5}\widetilde{\theta}_{4,4}.
\end{eqnarray*}
This contradicts the definition of $B_{n}^{11}$. Hence $n_{3}^{'}(B_{n}^{11})=0$.

From the facts $x_{1,3}(B_{n}^{11})=x_{1,4}(B_{n}^{11})=0$ and $n_{1}(B_{n}^{11})=n_{2}(B_{n}^{11})$, we deduce that no vertex of degree 2 lies on any cycle of $B_{n}^{11}$ which implies that no vertex of degree 3 lies on any cycle of $B_{n}^{11}$ because $n_{3}^{'}(B_{n}^{11})=0$. Note that in the graph $B_{n}^{11}$, each vertex of degree 3 is adjacent to two vertices of degree 2 that are adjacent to vertices of degree 1 which implies that $x_{2,2}(B_{n}^{11})=x_{3,3}(B_{n}^{11})=0$ and $n_{4}(B_{n}^{11})>0$. Hence, the vertices of degree 4 form a sub-bicyclic (connected) graph of $B_{n}^{11}$ and therefore
\begin{equation}\label{5}
x_{4,4}(B_{n}^{11})=n_{4}(B_{n}^{11})+1.
\end{equation}
Moreover, $x_{1,3}(B_{n}^{11})=x_{3,3}(B_{n}^{11})=n_{3}^{'}(B_{n}^{11})=0$ implies that
\begin{equation}\label{6}
x_{3,4}(B_{n}^{11})=n_{3}(B_{n}^{11}).
\end{equation}
Since $x_{1,3}(B_{n}^{11})=x_{1,4}(B_{n}^{11})=x_{3,3}(B_{n}^{11})=\widetilde{\theta}_{2,j}=0$ for all $j$, we have
\begin{equation}\label{7}
F(B_{n}^{11})=x_{3,4}(B_{n}^{11}).\widetilde{\theta}_{3,4}+x_{4,4}(B_{n}^{11}).\widetilde{\theta}_{4,4}.
\end{equation}
Using (\ref{5}) and (\ref{6}) in (\ref{7}), we get
\begin{equation}\label{8}
F(B_{n}^{11})=n_{3}(B_{n}^{11}).\widetilde{\theta}_{3,4}+n_{4}(B_{n}^{11}).\widetilde{\theta}_{4,4}+\widetilde{\theta}_{4,4}.
\end{equation}
Now, the relation $\displaystyle\sum_{i=1}^{4}i.n_{i}(B_{n}^{11})=2\left(\displaystyle\sum_{i=1}^{4}n_{i}(B_{n}^{11})+1\right)$ gives us
\[n_{1}(B_{n}^{11})=n_{3}(B_{n}^{11})+2n_{4}(B_{n}^{11})-2.\]
Bearing this preceding identity in mind and using the fact $n_{1}(B_{n}^{11})=n_{2}(B_{n}^{11})$, the equation $n(B_{n}^{11})=\displaystyle\sum_{i=1}^{4}n_{i}(B_{n}^{11})$ can be transformed to
\begin{equation}\label{9}
n_{4}(B_{n}^{11})=\frac{1}{5}\left(n(B_{n}^{11})-3n_{3}(B_{n}^{11})+4\right).
\end{equation}
From (\ref{8}) and (\ref{9}), it follows that
\begin{eqnarray*}
F(B_{n}^{11})&=&\frac{n(B_{n}^{11})+9}{5}\widetilde{\theta}_{4,4}+\left(\widetilde{\theta}_{3,4}-\frac{3}{5}\widetilde{\theta}_{4,4}\right)n_{3}(B_{n}^{11})\\
&\geq&\frac{n(B_{n}^{11})+9}{5}\widetilde{\theta}_{4,4}
\end{eqnarray*}
which is a contradiction to the Inequality (\ref{4}).\\
To prove that the bound is attainable, let us calculate the $AZI$ of the graph $B_{n}^{'}$  (see Fig. \ref{fig:2}).
\[AZI(B_{n}^{'})=(4k+20)8+(k+7)\left(\frac{8}{3}\right)^{3}=\frac{1376}{135}n+\frac{416}{15}.\]
\end{proof}
Denote by $\Psi_{n,m,\Delta}$ the collection of all those connected graphs $G$ having $n$ vertices, $m$ edges and maximum degree $\Delta$ in which $d_{u}=\Delta$ and $d_{v}=1$ or 2 for each edge $uv\in E(G)$. Wang \textit{et al.} (2012) gave the best possible lower bound for the $AZI$ of connected graphs:
\begin{lem}\label{L3}
(Wang \textit{et al.}, 2012). Let $G$ be a connected graph of order $n\geq3$ with $m$ edges and maximum degree
$\Delta$, where $2\leq\Delta\leq n-1$. Then
\[AZI(G)\geq\left(\frac{\Delta}{\Delta-1}\right)^{3}\left(2n-m-\frac{2m}{\Delta}\right)+8\left(2m-2n+\frac{2m}{\Delta}\right)\]
with equality if and only if $G\cong P_{n}$ for $\Delta=2$, and $G\in\Psi_{n,m,\Delta}$ with $m\equiv0(mod\Delta)$ for $\Delta\geq3$.
\end{lem}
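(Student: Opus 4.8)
The plan is to reduce the global bound to a single per-edge estimate and then sum, using two elementary counting identities. With $\theta_{i,j}=\left(\frac{ij}{i+j-2}\right)^{3}$ as in the paper, one has the handshake-type identity $\sum_{uv\in E(G)}\left(\frac{1}{d_{u}}+\frac{1}{d_{v}}\right)=n$ (each vertex $w$ contributes $\frac{1}{d_{w}}$ exactly $d_{w}$ times) together with $\sum_{uv\in E(G)}1=m$. Hence, setting $g_{i,j}:=\frac{2}{i}+\frac{2}{j}-\frac{2}{\Delta}-1$, we get $\sum_{uv\in E(G)}g_{d_{u},d_{v}}=2n-m-\frac{2m}{\Delta}$ and $\sum_{uv\in E(G)}\left(1-g_{d_{u},d_{v}}\right)=2m-2n+\frac{2m}{\Delta}$. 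Since $\theta_{1,\Delta}=\left(\frac{\Delta}{\Delta-1}\right)^{3}$, the right-hand side of the asserted bound equals $\theta_{1,\Delta}\left(2n-m-\frac{2m}{\Delta}\right)+8\left(2m-2n+\frac{2m}{\Delta}\right)$, so it suffices to establish the per-edge inequality
\[
\theta_{i,j}\ \geq\ \theta_{1,\Delta}\,g_{i,j}+8\left(1-g_{i,j}\right)\qquad\text{for }1\leq i,j\leq\Delta,\ \{i,j\}\neq\{1,1\},
\]
equivalently $\widetilde{\theta}_{i,j}\leq\widetilde{\theta}_{1,\Delta}\,g_{i,j}$, and then sum it over $E(G)$; equality in the sum then forces equality in every edge term.

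I would prove the per-edge inequality by splitting on $\min\{i,j\}$. If $\min\{i,j\}=2$ then $\theta_{i,j}=8$, so $\widetilde{\theta}_{i,j}=0$, while $g_{i,j}=\frac{2}{\max\{i,j\}}-\frac{2}{\Delta}\geq0$ and $\widetilde{\theta}_{1,\Delta}\geq0$; the claim follows, with equality iff $\max\{i,j\}=\Delta$. If $\min\{i,j\}=1$, say $i=1$ and $2\leq j\leq\Delta$, then $\widetilde{\theta}_{1,j}=8-\left(\frac{j}{j-1}\right)^{3}$ is nondecreasing in $j$, so $\widetilde{\theta}_{1,j}\leq\widetilde{\theta}_{1,\Delta}$, whereas $g_{1,j}=1+\frac{2}{j}-\frac{2}{\Delta}\geq1$, whence $\widetilde{\theta}_{1,\Delta}\,g_{1,j}\geq\widetilde{\theta}_{1,\Delta}\geq\widetilde{\theta}_{1,j}$ (equality iff $j=\Delta$). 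The remaining, and genuinely delicate, range is $3\leq i\leq j\leq\Delta$: here $\widetilde{\theta}_{i,j}\leq\widetilde{\theta}_{3,3}=8-\frac{729}{64}<0$, so if $g_{i,j}\geq0$ the inequality is immediate, while if $g_{i,j}<0$ one must show $\theta_{i,j}\geq8+\widetilde{\theta}_{1,\Delta}\,|g_{i,j}|$. For this, note $|g_{i,j}|=1+\frac{2}{\Delta}-\frac{2}{i}-\frac{2}{j}\leq1-\frac{2}{j}$ (since $i\leq\Delta$) and $\widetilde{\theta}_{1,\Delta}=8-\left(\frac{\Delta}{\Delta-1}\right)^{3}\leq7$, while $\theta_{i,j}\geq\theta_{3,j}=\left(\frac{3j}{j+1}\right)^{3}$ by monotonicity of $\theta_{i,j}$ in $i$; thus it suffices to check $\left(\frac{3j}{j+1}\right)^{3}\geq15-\frac{14}{j}$ for every integer $j\geq3$, a routine one-variable verification (the left side increases from $\frac{729}{64}$ toward $27$, the right side stays below $15$), and since the inequality is strict no equality occurs in this range.

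Summing the per-edge inequality over $E(G)$ and inserting the two identities gives the stated bound. For the equality case with $\Delta\geq3$, the analysis above shows equality holds iff every edge joins a vertex of degree $\Delta$ to one of degree $1$ or $2$, i.e.\ $G\in\Psi_{n,m,\Delta}$; in such a graph each edge has exactly one endpoint of degree $\Delta$ (as $\Delta>2$), so counting incidences at degree-$\Delta$ vertices yields $m=\Delta\cdot n_{\Delta}(G)$, hence $m\equiv0\pmod{\Delta}$, and conversely every member of $\Psi_{n,m,\Delta}$ meets the bound. For $\Delta=2$ one has $\widetilde{\theta}_{1,\Delta}=0$ and every edge $uv$ of a connected graph with $\Delta=2$ satisfies $\theta_{d_{u},d_{v}}=8$, so equality holds throughout, which among $n$-vertex connected graphs of maximum degree $2$ picks out $P_{n}$. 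The main obstacle is the last sub-case of the per-edge inequality ($i,j\geq3$ with $g_{i,j}<0$), where neither $\widetilde{\theta}_{i,j}$ nor $g_{i,j}$ has a fixed sign so one cannot argue by convexity and must instead play the cubic growth of $\theta_{i,j}$ against the bound $|g_{i,j}|<1$; everything else is the two counting identities plus monotonicity bookkeeping.
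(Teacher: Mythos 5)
The paper never proves this lemma at all: it is quoted verbatim from Wang \emph{et al.} (2012), so there is no in-paper argument to compare yours with, and your proposal has to stand as a self-contained proof. On that score the core of it is correct. The identities $\sum_{uv\in E(G)}\left(\frac{1}{d_{u}}+\frac{1}{d_{v}}\right)=n$ and $\sum_{uv\in E(G)}1=m$ do convert the stated bound into the per-edge inequality $\widetilde{\theta}_{d_{u},d_{v}}\leq\widetilde{\theta}_{1,\Delta}\,g_{d_{u},d_{v}}$, and your three cases verify it: for $\min\{i,j\}=2$ and $\min\{i,j\}=1$ the claim follows from $\widetilde{\theta}_{1,\Delta}\geq0$, $g\geq0$ (resp.\ $g\geq1$) and monotonicity of $\widetilde{\theta}_{1,j}$; and in the delicate range $3\leq i\leq j\leq\Delta$ with $g_{i,j}<0$ the chain $8+\widetilde{\theta}_{1,\Delta}\,|g_{i,j}|\leq 15-\frac{14}{j}\leq\left(\frac{3j}{j+1}\right)^{3}\leq\theta_{i,j}$ is valid, provided you actually record the checks at $j=3,4$ (namely $\frac{729}{64}\geq\frac{31}{3}$ and $\frac{1728}{125}\geq\frac{23}{2}$), since the left-hand cube only clears $15$ from $j=5$ onward; "increases toward $27$ while the right side stays below $15$" is not by itself enough for $j=3,4$. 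The $\Delta\geq3$ equality analysis is also sound: equality forces every edge to be of type $(1,\Delta)$ or $(2,\Delta)$, hence $G\in\Psi_{n,m,\Delta}$, and counting incidences at degree-$\Delta$ vertices gives $m=\Delta\cdot n_{\Delta}(G)$, so the congruence is automatic; conversely every such graph attains the bound edge by edge.

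The one genuine defect is the $\Delta=2$ equality claim. Your own argument shows that for any connected graph with $\Delta=2$ every edge has $\theta_{d_{u},d_{v}}=8$, so $AZI(G)=8m$, while for $\Delta=2$ the right-hand side of the bound also reduces to $8m$; hence equality holds for $C_{n}$ just as for $P_{n}$, and the concluding step "which among $n$-vertex connected graphs of maximum degree $2$ picks out $P_{n}$" is a non sequitur. Indeed the ``only if'' direction of the lemma as transcribed in this paper is false for cycles ($AZI(C_{n})=8n$ equals the bound), so the $\Delta=2$ equality characterization in the stated form cannot be derived by your argument or any other; this is a flaw in the quoted statement (note $C_{n}$ even lies in $\Psi_{n,n,2}$) rather than in your method, but your write-up should flag it instead of asserting that your analysis yields $P_{n}$ alone. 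The inequality itself, and the $\Delta\geq3$ equality case, are proved correctly.
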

As a consequence of Lemma \ref{L3}, we have:
\begin{cor}\label{C1}
If $B_{n}$ be a chemical bicyclic graph with $n$ vertices, then
\[AZI(B_{n})\geq\frac{4}{27}(35n+111)\]
equality holds if and only if $B_{n}\in\Psi_{n,n+1,4}$ with $n\equiv3\pmod{4}$.
\end{cor}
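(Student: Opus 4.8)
The plan is to obtain Corollary~\ref{C1} as a direct specialization of Lemma~\ref{L3}. First I would record the two structural facts we need about a chemical bicyclic graph $B_n$: it has exactly $m=n+1$ edges (by the definition of bicyclic), and its maximum degree satisfies $3\le\Delta(B_n)\le4$. The bound $\Delta\le4$ is the defining property of a chemical graph, while $\Delta\ge3$ follows from a handshake count, since $\sum_{u\in V(B_n)}d_u=2m=2n+2>2n$ means not every vertex can have degree at most $2$. One also notes that $n\ge4$ for any such $B_n$ (because $n+1\le\binom{n}{2}$), and that a vertex of degree $4$ together with its neighbours forces $n\ge5$; hence the hypothesis $2\le\Delta\le n-1$ of Lemma~\ref{L3} is always satisfied here.

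Next I would substitute $m=n+1$ and $\Delta\in\{3,4\}$ into the lower bound of Lemma~\ref{L3}. Denoting by $g(\Delta)$ the value of that bound at $m=n+1$, a routine computation gives $g(4)=\frac{4}{27}(35n+111)$ and $g(3)=\frac{1}{24}(155n+377)$, whence $g(3)-g(4)=\frac{275n-159}{216}>0$ for every $n\ge4$. Combining this with Lemma~\ref{L3} and the fact that $\Delta(B_n)\in\{3,4\}$, we obtain
\[
AZI(B_n)\ge g(\Delta(B_n))\ge g(4)=\frac{4}{27}(35n+111),
\]
which is the asserted inequality.

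For the equality statement I would argue as follows. If $AZI(B_n)=\frac{4}{27}(35n+111)=g(4)$, then $g(\Delta(B_n))\le g(4)$, and the strict inequality $g(3)>g(4)$ rules out $\Delta(B_n)=3$; hence $\Delta(B_n)=4$ and $AZI(B_n)=g(4)=g(\Delta(B_n))$ is exactly the equality case of Lemma~\ref{L3}. Since $\Delta=4\ge3$, that case occurs if and only if $B_n\in\Psi_{n,n+1,4}$ with $n+1\equiv0\pmod{4}$, i.e.\ $n\equiv3\pmod{4}$; the converse direction (every such $B_n$ attaining the bound) is the ``if'' part of Lemma~\ref{L3}. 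I do not expect a genuine obstacle: the only points that need a little care are the handshake argument pinning down $\Delta\ge3$, the arithmetic that puts $g(4)$ into the stated closed form, and checking the sign of $g(3)-g(4)$ so that the case $\Delta=3$ can be discarded in the equality analysis.
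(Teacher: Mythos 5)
Your proposal is correct and follows essentially the same route as the paper: specialize Lemma~\ref{L3} with $m=n+1$ and $\Delta\in\{3,4\}$, compute the two resulting bounds $\frac{1}{24}(155n+377)$ and $\frac{4}{27}(35n+111)$, and use the strict inequality between them to conclude, with the equality case read off from Lemma~\ref{L3} as $m=n+1\equiv 0\pmod 4$. You merely make explicit a few details the paper leaves implicit (the handshake argument for $\Delta\ge 3$, the hypothesis $\Delta\le n-1$, and the equality analysis), which is fine.
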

\begin{proof}
From the definition of $B_{n}$, it follows that $m=n+1$ and $\Delta=3$ or $4$. Hence

\[\left(\frac{\Delta}{\Delta-1}\right)^{3}\left(2n-m-\frac{2m}{\Delta}\right)+8\left(2m-2n+\frac{2m}{\Delta}\right)
=\begin{cases}
       \frac{1}{24}(155n+377) & \text{if $\Delta=3$,}\\
       \frac{4}{27}(35n+111) & \text{if $\Delta=4$.}
\end{cases} \]
By simple calculations, one have
\[\frac{1}{24}(155n+377)>\frac{4}{27}(35n+111).\]
Therefore, from Lemma \ref{L3} the desired result follows.
\end{proof}

\begin{rk}
By using the technique, adopted in the proof of Theorem \ref{t1}, we obtained the same lower bound as given in Corollary \ref{C1}.
\end{rk}
Now, combining the Theorem \ref{t1} and Corollary \ref{C1}, one have:
\begin{thm}\label{thm5}
Let $B_{n}$ be a chemical bicyclic graph with $n$ vertices, then
\[\frac{4}{27}(35n+111)\leq AZI(B_{n})\leq\frac{1376}{135}n+\frac{416}{15},\]
left equality holds if and only if $G\in\Psi_{n,n+1,4}$ with $n\equiv3\pmod{4}$. Moreover,
if $B_{n}\cong B_{n}^{'}$ then the right equality holds.
\end{thm}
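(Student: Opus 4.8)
The plan is essentially to observe that Theorem \ref{thm5} is nothing more than the conjunction of two bounds that have already been established, so its proof reduces to quoting them and concatenating. First I would invoke Theorem \ref{t1} to obtain the right-hand inequality $AZI(B_{n})\leq\frac{1376}{135}n+\frac{416}{15}$, together with the fact that this bound is attained when $B_{n}\cong B_{n}^{'}$, as witnessed by the direct computation $AZI(B_{n}^{'})=(4k+20)\cdot 8+(k+7)(8/3)^{3}=\frac{1376}{135}n+\frac{416}{15}$ for $n=5k+26$. Then I would invoke Corollary \ref{C1} to obtain the left-hand inequality $AZI(B_{n})\geq\frac{4}{27}(35n+111)$, with equality precisely when $B_{n}\in\Psi_{n,n+1,4}$ and $n\equiv 3\pmod{4}$. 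Placing these side by side yields the displayed chain of inequalities together with the stated descriptions of the extremal cases, so the argument is complete by inspection.

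At this stage there is no genuine obstacle; all the work was done in advance. The substantive difficulty resided in Theorem \ref{t1}, whose proof proceeds by a sequence of local surgeries on a putative counterexample violating $F(B_{n})\geq\frac{n+9}{5}\widetilde{\theta}_{4,4}$: one successively passes to a counterexample minimizing $x_{1,3}+x_{1,4}$ (forcing no pendant vertex to be adjacent to a vertex of degree $3$ or $4$), then to one minimizing $n_{2}-n_{1}$ (forcing $n_{1}=n_{2}$), then to one minimizing $n_{3}^{'}$ (forcing every degree-$3$ vertex to have at most one neighbour of degree exceeding $2$), each step either lowering the relevant parameter or directly contradicting Inequality (\ref{4}). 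The resulting structural constraints push the degree-$2$ and degree-$3$ vertices off the cycles, giving the edge-count identities $x_{4,4}=n_{4}+1$ and $x_{3,4}=n_{3}$, after which the handshake relation together with $n_{1}=n_{2}$ pins down $n_{4}=\tfrac{1}{5}(n-3n_{3}+4)$; substituting back shows $F(B_{n}^{11})\geq\frac{n+9}{5}\widetilde{\theta}_{4,4}$, the sought contradiction. For Corollary \ref{C1}, the only point requiring care was checking $\frac{1}{24}(155n+377)>\frac{4}{27}(35n+111)$, so that among the two admissible values $\Delta\in\{3,4\}$ the $\Delta=4$ branch of the Wang \textit{et al.} bound (Lemma \ref{L3}) is the binding one. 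Granting these two ingredients, Theorem \ref{thm5} follows immediately.
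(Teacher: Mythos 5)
Your proposal is correct and matches the paper exactly: the paper derives Theorem \ref{thm5} simply by combining Theorem \ref{t1} (upper bound, attained by $B_{n}^{'}$) with Corollary \ref{C1} (lower bound via Lemma \ref{L3}, with the equality characterization $B_{n}\in\Psi_{n,n+1,4}$, $n\equiv3\pmod{4}$). Your accompanying summary of how those two ingredients are themselves proved also reflects the paper's arguments faithfully.
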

Now, let us derive lower and upper bounds for chemical unicyclic graphs with $n$ vertices. For the unicyclic graph $U_{n}^{'}$ depicted in the Fig. \ref{fig:3}, one have
$$AZI(U_{n}^{'})=(4k+12)8+(k+3)\left(\frac{8}{3}\right)^{3}=\frac{1376}{135}n.$$
\renewcommand{\figurename}{Fig.}
\begin{figure}[H]
    \centering
    \includegraphics[width=3.5in, height=1.5in]{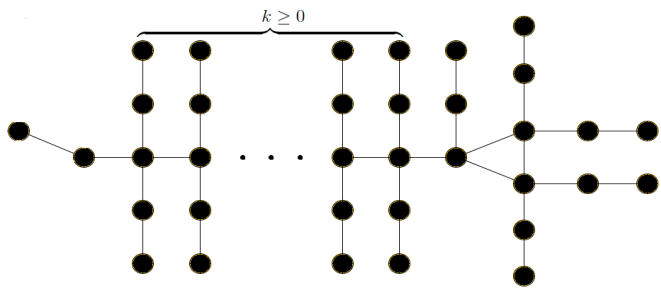}
    \caption{Chemical unicyclic graph $U_{n}^{'}$ where $n=5k+15$.}
    \label{fig:3}
\end{figure}
By using the same method as adopted to establish the Theorem \ref{thm5}, we have:
\begin{thm}
If $U_{n}$ is a chemical unicyclic graph with $n$ vertices, then
\[\frac{140}{27}n\leq AZI(U_{n})\leq\frac{1376}{135}n\]
left equality holds if and only if $G\in\Psi_{n,n,4}$ with $n\equiv0\pmod{4}$. Moreover, if $U_{n}\cong U_{n}^{'}$ then the right equality holds.
\end{thm}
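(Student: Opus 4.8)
The plan is to follow, step for step, the route used for Theorem~\ref{thm5}, simply replacing the edge count $m=n+1$ of the bicyclic setting by $m=n$. For the upper bound I would first note the analogue of Lemma~\ref{L1}: a chemical unicyclic graph with no pendent vertex is the cycle $C_n$, for which $AZI(C_n)=8n<\frac{1376}{135}n$. Next, writing
\[F(U_n)=\sum_{uv\in E(U_n)}\widetilde{\theta}_{d_u,d_v}=8n-AZI(U_n),\]
and using $\widetilde{\theta}_{4,4}=-\frac{296}{27}$, the analogue of Lemma~\ref{L2} is the elementary fact that $F(U_n)\geq\frac{n}{5}\widetilde{\theta}_{4,4}$ is equivalent to $AZI(U_n)\leq\frac{1376}{135}n$. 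So it remains to prove $F(U_n)\geq\frac{n}{5}\widetilde{\theta}_{4,4}$ when $U_n$ has a pendent vertex, which I would do by the same minimal-counterexample argument as in Theorem~\ref{t1}.

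Concretely: among all chemical unicyclic counterexamples on $n$ vertices pick one minimizing $x_{1,3}+x_{1,4}$; subdividing an edge from a pendent vertex to a vertex of degree $3$ or $4$ strictly decreases this quantity, and the estimate of Theorem~\ref{t1} goes through unchanged (the only feature of the ``$+9$'' used there is the identity $\frac{(n+1)+c}{5}\widetilde{\theta}_{4,4}=\frac{n+c}{5}\widetilde{\theta}_{4,4}+\frac{1}{5}\widetilde{\theta}_{4,4}$, valid for every constant $c$), so in fact $x_{1,3}=x_{1,4}=0$. Then, among the graphs so obtained, minimize $n_2-n_1$; the three cases and the corresponding operations (appending two pendent paths of length $2$, suppressing a degree-$2$ vertex, or appending one pendent path of length $2$) are copied verbatim, forcing $n_2=n_1$; and minimizing $n_3'$ and appending a pendent path of length $2$ forces $n_3'=0$. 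All the sign inequalities invoked in these steps ($2\widetilde{\theta}_{3,3}-\frac{2}{5}\widetilde{\theta}_{4,4}<0$, etc.) are independent of the shift from ``$+9$'' to ``$+0$''.

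At this stage, exactly as in Theorem~\ref{t1}, no vertex of degree $2$ or $3$ lies on the cycle, so the degree-$4$ vertices induce a connected unicyclic subgraph and $n_4>0$; hence $x_{4,4}=n_4$ (a unicyclic graph has as many edges as vertices) and $x_{3,4}=n_3$, while $\widetilde{\theta}_{2,j}=0$ for all $j$ gives $F(U_n)=n_3\widetilde{\theta}_{3,4}+n_4\widetilde{\theta}_{4,4}$. The handshake relation $\sum_{i=1}^{4} i\,n_i=2n=2\sum_{i=1}^{4} n_i$ yields $n_1=n_3+2n_4$, and combining with $n_1=n_2$ and $n=\sum_{i=1}^{4}n_i$ gives $n_4=\frac{1}{5}(n-3n_3)$; substituting,
\[F(U_n)=\frac{n}{5}\widetilde{\theta}_{4,4}+\left(\widetilde{\theta}_{3,4}-\frac{3}{5}\widetilde{\theta}_{4,4}\right)n_3\geq\frac{n}{5}\widetilde{\theta}_{4,4},\]
since $\widetilde{\theta}_{3,4}-\frac{3}{5}\widetilde{\theta}_{4,4}=8-\frac{1728}{125}+\frac{296}{45}>0$, contradicting the choice of counterexample. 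Attainment of the upper bound follows from the stated computation $AZI(U_n')=(4k+12)\cdot8+(k+3)\left(\frac{8}{3}\right)^{3}=\frac{1376}{135}n$ for $n=5k+15$.

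For the lower bound I would simply invoke Lemma~\ref{L3} with $m=n$, as in Corollary~\ref{C1}: since $2\leq\Delta\leq4$ for a chemical unicyclic graph, the lower bound of Lemma~\ref{L3} equals $8n$, $\frac{155}{24}n$, $\frac{140}{27}n$ according as $\Delta=2,3,4$, and $\frac{140}{27}n<\frac{155}{24}n<8n$, so $AZI(U_n)\geq\frac{140}{27}n$, with equality precisely when $\Delta=4$ and $U_n\in\Psi_{n,n,4}$ with $m=n\equiv0\pmod 4$. The main obstacle is not conceptual but bookkeeping: one must check that every sign inequality and every topological claim in the long transformation argument of Theorem~\ref{t1} survives the passage to the unicyclic setting, the most delicate point being that the degree-$4$ vertices now induce a \emph{unicyclic} (not bicyclic) subgraph, so that $x_{4,4}=n_4$ and $n_4>0$ in the final counting.
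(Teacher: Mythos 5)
Your proposal is correct and takes essentially the same approach as the paper: the paper's own ``proof'' of this theorem is just the remark that one repeats the method of Theorem \ref{thm5}, and your write-up is exactly that adaptation (upper bound by the $F$-function minimal-counterexample argument with the constant $+9$ replaced by $0$, the cycle case $C_n$ replacing Lemma \ref{L1}, and $x_{4,4}=n_4$ instead of $n_4+1$; lower bound from Lemma \ref{L3} with $m=n$ as in Corollary \ref{C1}), with all the numerical checks ($\widetilde{\theta}_{4,4}=-\tfrac{296}{27}$, $\widetilde{\theta}_{3,4}-\tfrac{3}{5}\widetilde{\theta}_{4,4}>0$, the values $8n$, $\tfrac{155}{24}n$, $\tfrac{140}{27}n$) done correctly.
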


\section*{Nordhaus-Gaddum-type results for AZI}

Nordhaus \& Gaddum (1956) gave tight bounds on the product and sum of the chromatic numbers of a graph and its complement. After their seminal work, such type of results have been derived for several other graph invariants, details can be found in the recent survey (Aouchiche \& Hansen, 2013). Here, we derive such kind of relation for the $AZI$. To proceed, we need some known results.
\begin{lem}
(Huang \textit{et al.}, 2012). Let $G$ be a connected graph with $m\geq2$ edges and maximum degree $\Delta$. Then
\begin{equation}\label{10}
AZI(G)\leq\frac{m\Delta^{6}}{8(\Delta-1)^{3}}
\end{equation}
with equality holding if and only if $G$ is a path or a $\Delta$-regular graph.
\end{lem}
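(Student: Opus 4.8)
The plan is to reduce the inequality to a single‑edge estimate. The starting point is the identity $\frac{\Delta^{6}}{8(\Delta-1)^{3}}=\left(\frac{\Delta\cdot\Delta}{\Delta+\Delta-2}\right)^{3}=\theta_{\Delta,\Delta}$, where $\theta_{i,j}=\left(\frac{ij}{i+j-2}\right)^{3}$ as in Lemma~\ref{L2}; thus the claimed bound reads $AZI(G)=\sum_{uv\in E(G)}\theta_{d_{u},d_{v}}\le m\,\theta_{\Delta,\Delta}$. Since $G$ is connected with $m\ge2$, it is not $K_{2}$, so every edge $uv$ has $d_{u}+d_{v}\ge3$ (all denominators positive) and $\Delta\ge2$. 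It therefore suffices to prove the per‑edge inequality
\[
\frac{d_{u}d_{v}}{d_{u}+d_{v}-2}\ \le\ \frac{\Delta^{2}}{2(\Delta-1)}\qquad\text{for every }uv\in E(G),
\]
and then sum over the $m$ edges; equality in the total bound is then equivalent to equality on every edge.

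For the per‑edge estimate I would argue by monotonicity in one coordinate. A one‑line manipulation shows that, for fixed $d_{v}\ge2$, the function $d_{u}\mapsto\frac{d_{u}d_{v}}{d_{u}+d_{v}-2}$ is non‑decreasing (strictly increasing when $d_{v}\ge3$), and symmetrically in $d_{u}$. Hence, if some endpoint has degree $\le2$, say $d_{v}\le2$, then $\frac{d_{u}d_{v}}{d_{u}+d_{v}-2}\le2$ (it equals $2$ when $d_{v}=2$; when $d_{v}=1$ it equals $\frac{d_{u}}{d_{u}-1}\le2$ since $d_{u}\ge2$), and $2\le\frac{\Delta^{2}}{2(\Delta-1)}$ is just $(\Delta-2)^{2}\ge0$. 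If instead $d_{u},d_{v}\ge3$, two applications of the monotonicity give $\frac{d_{u}d_{v}}{d_{u}+d_{v}-2}\le\frac{\Delta d_{v}}{\Delta+d_{v}-2}\le\frac{\Delta^{2}}{2(\Delta-1)}$. (Equivalently one may put $a=d_{u}-1,\ b=d_{v}-1$, use $ab\le(a+b)^{2}/4$, and maximize the convex function $s\mapsto1+\frac{s}{4}+\frac{1}{s}$ over $s=a+b\in[1,2(\Delta-1)]$; this works cleanly for $\Delta\ge3$, but that relaxation is not tight at $\Delta=2$.)

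It remains to identify the equality cases, splitting on $\Delta$. If $\Delta=2$, then $G$ is a path $P_{n}$ ($n\ge3$) or a cycle $C_{n}$; every edge has degree pair in $\{(1,2),(2,2)\}$, each contributing $\theta=8$, so $AZI(G)=8m=\frac{m\,2^{6}}{8(2-1)^{3}}$ and equality always holds — matching the claimed extremal set (a path, or the $2$‑regular graph $C_{n}$). If $\Delta\ge3$, then $2<\frac{\Delta^{2}}{2(\Delta-1)}$ strictly, so any edge incident to a vertex of degree $\le2$ is strictly below the bound, while for edges with $d_{u},d_{v}\ge3$ the strict monotonicity forces $d_{u}=d_{v}=\Delta$. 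Thus the total bound is attained iff every edge joins two vertices of degree $\Delta$, i.e. iff $G$ is $\Delta$‑regular. Combining both regimes gives exactly ``a path or a $\Delta$‑regular graph''.

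The one genuinely delicate point — and where a purely analytic argument breaks down — is the case $\Delta=2$: there the continuous optimisation is not tight (it returns $9/4>2$), so the bound relies on the degrees being integers, which is why I would dispatch $\Delta=2$ via the structural classification above rather than by calculus. Everything else is the elementary monotonicity computation and the routine verification of when the inequalities are equalities.
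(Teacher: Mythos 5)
The paper never proves this lemma; it is quoted as a known result of Huang \textit{et al.} (2012), so there is no in-paper argument to compare against, and your submission should be judged as a self-contained proof. As such it is correct. The reduction to the per-edge estimate $\frac{d_{u}d_{v}}{d_{u}+d_{v}-2}\le\frac{\Delta^{2}}{2(\Delta-1)}$ is the standard route (maximize the edge weight over admissible degree pairs), and your monotonicity claim is exactly right: the partial derivative in $d_{u}$ has the sign of $d_{v}(d_{v}-2)$, so the map is constant when $d_{v}=2$ and strictly increasing when $d_{v}\ge3$. You correctly use connectedness and $m\ge2$ to exclude $K_{2}$, so that pendant edges give $\frac{d_{u}}{d_{u}-1}\le2\le\frac{\Delta^{2}}{2(\Delta-1)}$ (the latter being $(\Delta-2)^{2}\ge0$), and the case $d_{u},d_{v}\ge3$ follows from two applications of the strict monotonicity. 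The equality analysis is also sound: for $\Delta=2$ every connected graph with $m\ge2$ (path or cycle) attains the bound, matching the stated extremal family since $C_{n}$ is $2$-regular, and for $\Delta\ge3$ the strictness of $2<\frac{\Delta^{2}}{2(\Delta-1)}$ plus strict monotonicity forces every edge to join two degree-$\Delta$ vertices, which by connectedness means $G$ is $\Delta$-regular. Your explicit flagging of the integrality issue at $\Delta=2$ (where the continuous relaxation is not tight) is a genuine strength rather than a gap, since you dispose of that case structurally.
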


A graph $G$ is said to be $(r_{1},r_{2})$-regular (or simply biregular) if $\Delta\neq\delta$ and $d_{u}=r_{1}$ or $r_{2}$, for every vertex $u$ of $G$.
Let $\Phi_{1}$ denote the collection of those connected graphs whose pendent edges are incident with the maximum
degree vertices and all other edges have at least one end-vertex of degree 2. Let $\Phi_{2}$ be the collection
of connected graphs having no pendent vertices but all the edges have at least one end-vertex of degree 2.
\begin{lem}
(Wang \textit{et al.}, 2012). Let $G$ be a connected graph of order $n\geq3$ with $m$ edges, $p$ pendent vertices,
maximum degree $\Delta$ and minimum non-pendent vertex degree $\delta_{1}$. Then
\begin{equation}\label{11}
AZI(G)\geq p\left(\frac{\Delta}{\Delta-1}\right)^{3}+(m-p)\left(\frac{\delta_{1}^{2}}{2\delta_{1}-2}\right)^{3}
\end{equation}
with equality if and only if G is isomorphic to a (1,$\Delta$)-biregular graph or G is isomorphic to a
regular graph or $G\in\Phi_{1}$ or $G\in\Phi_{2}$.
\end{lem}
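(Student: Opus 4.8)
The plan is to establish the bound edge by edge and then sum, and to read off the equality cases from the same edgewise analysis. The engine is the monotonicity of the auxiliary function $g(x,y)=\frac{xy}{x+y-2}$, for which $\theta_{i,j}=g(i,j)^{3}$. A short computation gives $\frac{\partial g}{\partial x}=\frac{y(y-2)}{(x+y-2)^{2}}$, which is $\geq 0$ for $y\geq 2$ and $>0$ for $y\geq 3$; by symmetry the same holds in $y$. First I would dispose of the pendent edges: an edge $uv$ with $d_{u}=1$, $d_{v}=d$ contributes $\theta_{1,d}=\left(\frac{d}{d-1}\right)^{3}$, and since $t\mapsto t/(t-1)$ is strictly decreasing and $d\leq\Delta$, this is at least $\left(\frac{\Delta}{\Delta-1}\right)^{3}$. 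Next, a non-pendent edge $uv$ has $d_{u},d_{v}\geq\delta_{1}\geq 2$ (note $\delta_{1}$ is well defined and at least $2$ as soon as $p>0$, since the neighbour of a pendent vertex in a connected graph on $n\geq 3$ vertices is itself non-pendent), so applying the monotonicity of $g$ in each slot yields $g(d_{u},d_{v})\geq g(\delta_{1},d_{v})\geq g(\delta_{1},\delta_{1})$, hence that contribution is at least $\left(\frac{\delta_{1}^{2}}{2\delta_{1}-2}\right)^{3}$. Adding up the $p$ pendent-edge estimates and the $m-p$ non-pendent-edge estimates gives (\ref{11}).

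For the equality characterization I would trace when each edgewise estimate is tight and then split into cases on whether $p=0$ and whether $\delta_{1}=2$. Tightness on the pendent part forces every pendent vertex to be adjacent to a vertex of degree $\Delta$; tightness on a non-pendent edge forces $g(d_{u},d_{v})=g(\delta_{1},\delta_{1})$, which for $\delta_{1}=2$ means $(d_{u}-2)(d_{v}-2)=0$ (at least one end of degree $2$) and for $\delta_{1}\geq 3$ forces $d_{u}=d_{v}=\delta_{1}$ by strict monotonicity. Combining the cases: if $p=0$ and $\delta=2$ then every edge has a degree-$2$ end, i.e. $G\in\Phi_{2}$ (which absorbs the $2$-regular graphs); if $p=0$ and $\delta\geq 3$ then every vertex has degree $\delta$, so $G$ is regular; if $p>0$ and $\delta_{1}=2$ then $G\in\Phi_{1}$ (which absorbs the paths and the stars $K_{1,\Delta}$); and if $p>0$ and $\delta_{1}\geq 3$, a short argument—a vertex of degree $\Delta>\delta_{1}$ would be forced to have all neighbours pendent, i.e. $G$ would be a star and $\delta_{1}=\Delta$—shows $\Delta=\delta_{1}$ and that all non-pendent vertices share this common degree, so $G$ is $(1,\Delta)$-biregular. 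Conversely, each of the four listed families makes (\ref{11}) an equality, which is immediate once the per-edge equalities above are in hand.

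The step I expect to be the real obstacle is not the inequality, which is a one-line-per-edge estimate, but the equality bookkeeping: one must verify that the four advertised families exactly exhaust the equality cases and that their overlaps are consistent (cycles are both regular and in $\Phi_{2}$; paths and stars $K_{1,\Delta}$ are both $(1,\Delta)$-biregular and in $\Phi_{1}$), and one must check the degenerate situations permitted by the hypotheses $n\geq 3$ and $2\leq\Delta\leq n-1$. A small but essential subtlety is making sure that every non-pendent edge genuinely has both ends of degree $\geq 2$, so that the monotonicity of $g$ applies uniformly; this is precisely the point at which the bound involves the minimum \emph{non-pendent} degree $\delta_{1}$ rather than the minimum degree $\delta$.
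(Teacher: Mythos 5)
The paper itself offers no proof of this lemma: it is quoted, with citation, from Wang \textit{et al.} (2012), so there is no in-paper argument to compare yours against. Your self-contained edgewise proof is correct, and it is essentially the standard argument for this bound: the computation $\partial g/\partial x=\frac{y(y-2)}{(x+y-2)^{2}}$ for $g(x,y)=\frac{xy}{x+y-2}$ gives the needed monotonicity, the split of $E(G)$ into the $p$ pendent edges (bounded below via the strict decrease of $t\mapsto t/(t-1)$ and $d_{v}\leq\Delta$) and the $m-p$ non-pendent edges (both ends of degree at least $\delta_{1}\geq2$, so $g(d_{u},d_{v})\geq g(\delta_{1},\delta_{1})$) yields the stated bound, and your equality bookkeeping is sound: $g(d_{u},d_{v})=2$ with $d_{u},d_{v}\geq2$ is equivalent to $(d_{u}-2)(d_{v}-2)=0$, strict monotonicity settles the case $\delta_{1}\geq3$, and the four cases on $p=0$ versus $p>0$ and $\delta_{1}=2$ versus $\delta_{1}\geq3$ land exactly on $\Phi_{2}$, regular, $\Phi_{1}$ and $(1,\Delta)$-biregular graphs, with the star argument correctly forcing $\Delta=\delta_{1}$ in the last case. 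Two cosmetic remarks only: $\delta_{1}\geq2$ holds for every connected graph of order $n\geq3$, not merely when $p>0$, since a non-pendent vertex has degree at least $2$ by definition; and in the converse direction for $G\in\Phi_{1}$ one should say explicitly that either $G$ is a star (so $m-p=0$) or $G$ has a non-pendent edge and hence $\delta_{1}=2$, which is what makes the $(m-p)\left(\frac{\delta_{1}^{2}}{2\delta_{1}-2}\right)^{3}$ term exact.
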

Now, we are ready to prove the Nordhaus-Gaddum-type result for the $AZI$:
\begin{thm}
Let G be a connected graph of order $n\geq3$ such that its complement $\overline{G}$ is connected. Let $\Delta$, $\delta_{1}$, p and $\overline{\Delta}$, $\overline{\delta_{1}}$, $\overline{p}$ denote the maximum degree, minimal non-pendent vertex degree, the number of pendent vertices in G and $\overline{G}$ respectively. If $\alpha=\min\{\delta_{1},\overline{\delta_{1}}\ \}$ and $\beta=\max\{\Delta,\overline{\Delta}\ \}$, then
\begin{eqnarray}\nonumber\label{11a}
&&(p+\overline{p})\left(\frac{n-2}{n-3}\right)^{3}\left(1-\left(\frac{n-2}{2}\right)^{3}\right)
+\dbinom{n}{2}\left(\frac{\alpha^{2}}{2\alpha-2}\right)^{3}\\
&&\leq AZI(G)+AZI(\overline{G})\leq\dbinom{n}{2}\left(\frac{\beta^{2}}{2\beta-2}\right)^{3}
\end{eqnarray}
with equalities if and only if $G\cong P_{4}$ or $G$ is isomorphic to $r$-regular graph with $2r+1$ vertices.
\end{thm}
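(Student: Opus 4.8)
The plan is to bound $AZI(G)+AZI(\overline G)$ by applying the two quoted lemmas to $G$ and $\overline G$ separately and then combining. For the upper bound I would invoke the Huang \emph{et al.} inequality $AZI(H)\le m_H\Delta_H^6/(8(\Delta_H-1)^3)$ for both $H=G$ and $H=\overline G$. Since $m_G+m_{\overline G}=\binom n2$, and since the function $t\mapsto t^6/(8(t-1)^3) = \big(t^2/(2t-2)\big)^3$ is increasing in $t$ for $t\ge 2$ (this monotonicity, which I should verify by a short derivative computation, is the small technical fact underpinning everything), both $\Delta_G^6/(8(\Delta_G-1)^3)$ and $\overline\Delta^6/(8(\overline\Delta-1)^3)$ are at most $\beta^6/(8(\beta-1)^3)=\big(\beta^2/(2\beta-2)\big)^3$ where $\beta=\max\{\Delta,\overline\Delta\}$. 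Summing the two bounds then gives $AZI(G)+AZI(\overline G)\le\big(m_G+m_{\overline G}\big)\big(\beta^2/(2\beta-2)\big)^3=\binom n2\big(\beta^2/(2\beta-2)\big)^3$, which is the claimed right-hand side.

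For the lower bound I would apply the Wang \emph{et al.} bound (\ref{11}) to $G$ and to $\overline G$:
\[
AZI(G)\ge p\Big(\tfrac{\Delta}{\Delta-1}\Big)^3+(m_G-p)\Big(\tfrac{\delta_1^2}{2\delta_1-2}\Big)^3,\qquad
AZI(\overline G)\ge \overline p\Big(\tfrac{\overline\Delta}{\overline\Delta-1}\Big)^3+(m_{\overline G}-\overline p)\Big(\tfrac{\overline{\delta_1}^2}{2\overline{\delta_1}-2}\Big)^3.
\]
Here I would use the structural facts that since $G$ and $\overline G$ are both connected on $n$ vertices we have $1\le\delta_1,\overline{\delta_1}$ and $\Delta,\overline\Delta\le n-2$ (a connected graph on $n$ vertices whose complement is also connected cannot have a vertex of degree $n-1$, nor an isolated vertex). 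Because $t\mapsto\big(t^2/(2t-2)\big)^3$ is increasing for $t\ge 2$, each factor $\big(\delta_1^2/(2\delta_1-2)\big)^3$ and $\big(\overline{\delta_1}^2/(2\overline{\delta_1}-2)\big)^3$ is at least $\big(\alpha^2/(2\alpha-2)\big)^3$ with $\alpha=\min\{\delta_1,\overline{\delta_1}\}$; and $\Delta,\overline\Delta\le n-2$ makes $\big(\Delta/(\Delta-1)\big)^3,\big(\overline\Delta/(\overline\Delta-1)\big)^3\ge\big((n-2)/(n-3)\big)^3$ (using that $t\mapsto\big(t/(t-1)\big)^3$ is decreasing). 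Plugging these in, writing the pendant contribution as $(p+\overline p)\big((n-2)/(n-3)\big)^3$ and collecting the remaining $\binom n2-(p+\overline p)$ edges with the factor $\big(\alpha^2/(2\alpha-2)\big)^3$, then algebraically regrouping so the ``$-(p+\overline p)$'' is absorbed into the first term, yields exactly the stated left-hand side $(p+\overline p)\big(\tfrac{n-2}{n-3}\big)^3\big(1-\big(\tfrac{n-2}{2}\big)^3\big)+\binom n2\big(\tfrac{\alpha^2}{2\alpha-2}\big)^3$. I should double-check this regrouping, since it is where the somewhat odd-looking factor $1-\big((n-2)/2\big)^3$ appears.

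The main obstacle, and the part deserving the most care, is the equality analysis. For the upper bound, equality in the Huang \emph{et al.} lemma forces each of $G,\overline G$ to be a path or regular, and equality in the monotonicity step forces $\Delta=\overline\Delta=\beta$; I would run through the cases (both paths, one path one regular, both regular) and use the complementation relation $\Delta+\overline\delta=n-1$ to see that the surviving possibilities are $G\cong P_4$ (note $\overline{P_4}\cong P_4$) and $G$ an $r$-regular graph on $2r+1$ vertices (so $\overline G$ is also $r$-regular, since $\overline\Delta=n-1-\delta=2r-r=r$). For the lower bound, equality in (\ref{11}) forces $G$ (and $\overline G$) into one of the four listed families, equality in the $\delta_1$-monotonicity forces $\delta_1=\overline{\delta_1}=\alpha$, and equality in the pendant-degree step forces any pendant vertices to attach at degree-$(n-2)$ vertices; again one checks that this collapses to $P_4$ or the self-complementary-degree-sequence regular graphs on $2r+1$ vertices. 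The delicate point is confirming that the \emph{same} graphs achieve both the upper and the lower bound simultaneously, so that the single ``if and only if'' in the statement is correct; I expect this to go through precisely because in both extremal families every edge joins two vertices of the same degree, so all the intermediate inequalities degenerate to equalities at once.
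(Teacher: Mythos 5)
Your proposal follows essentially the same route as the paper's proof: apply the Huang \emph{et al.} upper bound and the Wang \emph{et al.} lower bound to $G$ and $\overline G$ separately, exploit the monotonicity of $\bigl(t^{2}/(2t-2)\bigr)^{3}$ and $\bigl(t/(t-1)\bigr)^{3}$ together with $\Delta,\overline{\Delta}\le n-2$ and $m+\overline m=\binom{n}{2}$, and then analyse the equality cases, which is exactly the paper's argument. The only point to tidy is your regrouping in the lower bound: collecting the $\binom{n}{2}-(p+\overline p)$ non-pendent edges at the factor $\bigl(\alpha^{2}/(2\alpha-2)\bigr)^{3}$ gives a slightly \emph{stronger} intermediate bound, and to land on exactly the stated left-hand side you need one further use of $\alpha\le n-2$ and monotonicity to replace the subtracted coefficient $\bigl(\alpha^{2}/(2\alpha-2)\bigr)^{3}$ by $\bigl((n-2)^{2}/(2(n-3))\bigr)^{3}$, which is the order of operations the paper uses from the start.
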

\begin{proof}
Suppose that $m$ and $\overline{m}$ are the number of edges in $G$ and $\overline{G}$ respectively. Firstly, we will prove the lower bound. Since both $G$ and $\overline{G}$ are connected, we have $\delta_{1}\leq\Delta\leq n-2$. Note that both the functions $f(x)=-\frac{x^{2}}{2x-2}$ and $g(x)=\frac{x}{x-1}$ are decreasing in the interval $[2,\infty)$, which implies that
\[-\frac{\displaystyle\delta_{1}^{2}}{2\delta_{1}-2}\geq-\frac{(n-2)^{2}}{2(n-3)}\ \text{and} \ \frac{\Delta}{\Delta-1}\geq\frac{n-2}{n-3}.\]
From (\ref{11}), we have
\begin{eqnarray}\nonumber\label{11b}
AZI(G)&\geq&p\left(\frac{n-2}{n-3}\right)^{3}+m\left(\frac{\delta_{1}^{2}}{2\delta_{1}-2}\right)^{3}
-p\left(\frac{(n-2)^{2}}{2(n-3)}\right)^{3}\\
&=&m\left(\frac{\delta_{1}^{2}}{2\delta_{1}-2}\right)^{3}
+p\left(\frac{n-2}{n-3}\right)^{3}\left(1-\left(\frac{n-2}{2}\right)^{3}\right)
\end{eqnarray}
this implies
\begin{eqnarray}\nonumber\label{12}
AZI(G)+AZI(\overline{G})&\geq&m\left(\frac{\delta_{1}^{2}}{2\delta_{1}-2}\right)^{3}
+\overline{m}\left(\frac{\overline{\delta_{1}}^{2}}{2\overline{\delta_{1}}-2}\right)^{3}\\
&+&(p+\overline{p})\left(\frac{n-2}{n-3}\right)^{3}\left(1-\left(\frac{n-2}{2}\right)^{3}\right).
\end{eqnarray}
Since the function $-f$ is increasing in the interval $[2,\infty)$ and $\delta_{1},\overline{\delta_{1}}\geq\alpha\geq2$, from (\ref{12}) it follows that
\begin{eqnarray}\nonumber\label{13}
AZI(G)+AZI(\overline{G})&\geq&m\left(\frac{\alpha^{2}}{2\alpha-2}\right)^{3}
+\overline{m}\left(\frac{\alpha^{2}}{2\alpha-2}\right)^{3}\\
&+&(p+\overline{p})\left(\frac{n-2}{n-3}\right)^{3}\left(1-\left(\frac{n-2}{2}\right)^{3}\right).
\end{eqnarray}
After using the fact $\overline{m}+m=\dbinom{n}{2}$ in (\ref{13}), one obtains the desired lower bound.
Now, we prove the upper bound. From (\ref{10}), it follows that
\begin{equation}\label{14}
AZI(G)+AZI(\overline{G})\leq\frac{m\Delta^{6}}{8(\Delta-1)^{3}}
+\frac{\overline{m}\displaystyle\overline{\Delta}^{6}}{8(\displaystyle\overline{\Delta}-1)^{3}}
\end{equation}
Since the function $h(x)=\frac{x^{6}}{8(x-1)^{3}}$ is increasing in the interval $[2,\infty)$ and $\Delta, \overline{\Delta}\geq2$, from Inequality (\ref{14}) we have
\begin{equation}\label{15}
AZI(G)+AZI(\overline{G})\leq\frac{m\beta^{6}}{8(\beta-1)^{3}}
+\frac{\overline{m}\displaystyle\beta^{6}}{8(\displaystyle\beta-1)^{3}}=\dbinom{n}{2}\left(\frac{\beta^{2}}{2\beta-2}\right)^{3}.
\end{equation}
Now, let us discuss the equality cases. If $G\cong P_{4}$ then $\overline{G}\cong P_{4}$ and if $G$ is isomorphic to $r$-regular graph with $2r+1$ vertices then $\overline{G}$ is also isomorphic to $r$-regular graph. Hence in either case, both lower and upper bounds are attained. Conversely, first let us suppose that left equality in (\ref{11a}) holds. Then all the Inequalities (\ref{11b}), (\ref{12}), (\ref{13}) must be Equalities.

\begin{description}
  \item[a)] Equality in (\ref{13}) implies that $\delta_{1}=\overline{\delta_{1}}$.
  \item[b)] Equality in (\ref{12}) implies\\
             \textit{(i).} $G$ is isomorphic to regular graph or $G\cong P_{4}$, and\\
             \textit{(ii).} $\overline{G}$ is isomorphic to regular graph or $\overline{G}\cong P_{4}$.
  \item[c)] Equality in (\ref{11b}) implies that either $\delta_{1}=\Delta=n-2$ and $p\neq0$ or
              $G$ is isomorphic to a regular graph.
\end{description}
Using the fact $P_{4}\cong \overline{P_{4}}$ and combining all the results derived in a), b), c), we obtain the desired conclusion. Finally, suppose that right equality in (\ref{11a}) holds, then both the Inequalities (\ref{14}), (\ref{15}) must be Equalities. Equality in (\ref{15}) implies that $\Delta=\overline{\Delta}=\beta$. Equality in (\ref{14}) implies that \\
\textit{i)} $G\cong P_{4}$ or $G$ is isomorphic to regular graph and \\
\textit{ii)} $\overline{G}\cong P_{4}$ or $\overline{G}$ is isomorphic to regular graph.\\
 Therefore, either $G\cong P_{4}$ or $G$ is isomorphic to $r$-regular graph with $2r+1$ vertices.
\end{proof}

\end{document}